\documentclass[11pt,a4paper]{article}

\usepackage{amsmath}
\usepackage{amsthm}
\usepackage{amssymb}
\usepackage{cain_arxiv}


\theoremstyle{definition}
\newtheorem{definition}{Definition}[section]
\newtheorem{example}[definition]{Example}

\theoremstyle{plain}
\newtheorem{proposition}[definition]{Proposition}
\newtheorem{lemma}[definition]{Lemma}
\newtheorem{theorem}[definition]{Theorem}

\numberwithin{equation}{section}

\def\fullref#1#2{%
  \ifdefined\hyperref%
    {\hyperref[#2]{#1\space\penalty 200\relax\ref*{#2}}}%
  \else%
    {#1\space\penalty 200\relax\ref{#2}}%
  \fi%
}

\newcommand{\defterm}[1]{\textit{#1}}

\newcommand{\fsa}[1]{\mathfrak{#1}}

\newcommand{\pres}[2]{\left\langle #1\:|\:#2 \right\rangle}

\newcommand{\nset}{\mathbb{N}}

\newcommand{\emptyword}{\varepsilon}
\newcommand{\rel}[1]{\mathcal{#1}}

\newcommand{\imderives}{\Rightarrow}
\newcommand{\derives}{\Rightarrow^*}

\newcommand{\rev}{\mathrm{rev}}

\newcommand{\dash}{\unskip\space---\space}


\begin{document}




\title{Context-free rewriting systems and word-hy\-per\-bol\-ic structures with uniqueness}
\author{Alan J. Cain \& Victor Maltcev}
\date{}
\thanks{The first author's research was funded by the European
  Regional Development Fund through the programme {\sc COMPETE} and by
  the Portuguese Government through the {\sc FCT} (Funda\c{c}\~{a}o
  para a Ci\^{e}ncia e a Tecnologia) under the project
  {\sc PEst-C}/{\sc MAT}/{\sc UI}0144/2011 and through an {\sc FCT} Ci\^{e}ncia 2008
  fellowship.}

\maketitle

\address[AJC]{%
Centro de Matem\'{a}tica, Universidade do Porto, \\
Rua do Campo Alegre 687, 4169--007 Porto, Portugal
}
\email{%
ajcain@fc.up.pt
}
\webpage{%
www.fc.up.pt/pessoas/ajcain/
}
\address[VM]{%
School of Mathematics \& Statistics, University of St Andrews,\\
North Haugh, St Andrews, Fife KY16 9SS, United Kingdom
}
\email{%
victor@mcs.st-andrews.ac.uk
}

\begin{abstract}
This paper proves that any monoid presented by a confluent
con\-text-free monadic rewriting system is word-hy\-per\-bol\-ic. This result then
applied to answer a question asked by Duncan \& Gilman by exhibiting
an example of a word-hy\-per\-bol\-ic monoid that does not admit a
word-hy\-per\-bol\-ic structure with uniqueness (that is, in which the language
of representatives maps bijectively onto the monoid).
\end{abstract}


\section{Introduction}

Hyperbolic groups \dash groups whose Cayley graphs are
hy\-per\-bol\-ic metric spaces \dash have grown into one of the most
fruitful areas of group theory since the publication of Gromov's
seminal paper \cite{gromov_hyperbolic}. The concept of hyperbolicity
generalizes to semigroups and monoids in more than one way. First, one
can consider semigroups and monoids whose Cayley graphs are
hy\-per\-bol\-ic \cite{c_rshcg,cassaigne_infinitewords}. Second, one
can use Gilman's characterization of hy\-per\-bol\-ic groups using
con\-text-free languages \cite{gilman_hyperbolic}. This
characterization says that a group $G$ is hy\-per\-bol\-ic if and only if
there is a regular language $L$ (over some generating set) of normal
forms for $G$ such that the language
\[
M(L) = \big\{u\#_1v\#_2w^\rev : u,v,w \in L \land uv =_G w\big\}
\]
(where $w^\rev$ denotes the reverse of $w$) is con\-text-free. (The pair
$(L,M(L))$ is called a \defterm{word-hy\-per\-bol\-ic structure}.) Duncan \&
Gilman \cite{duncan_hyperbolic} pointed out that this characterization
generalizes naturally to semigroups and monoids. The geometric
generalization gives rise to the notion of \defterm{hy\-per\-bol\-ic}
semigroup; the linguistic one to the notion of
\defterm{word-hy\-per\-bol\-ic} semigroups. While the two notions are
equivalent for groups \cite[Corollary~4.3]{duncan_hyperbolic} and more
generally for completely simple semigroups
\cite[Theorem~4.1]{fountain_hyperbolic}, they are not equivalent for
general semigroups. This paper is concerned with
\emph{word-hy\-per\-bol\-ic} semigroups.

Some of the pleasant properties of hy\-per\-bol\-ic groups do not generalize
to word-hy\-per\-bol\-ic semigroups. For example, hy\-per\-bol\-ic groups are
always automatic \cite[Theorem~3.4.5]{epstein_wordproc};
word-hy\-per\-bol\-ic semigroups may not even be \emph{asynchronously}
automatic \cite[Example~7.7]{hoffmann_relatives}. On the other hand,
word-hyperbolicity for semigroups is independent of the choice of
generating set \cite[Theorem~3.4]{duncan_hyperbolic}, unlike
automaticity for semigroups \cite[Example~4.5]{campbell_autsg}.

Duncan \& Gilman \cite[Question~2]{duncan_hyperbolic} asked whether
every word-hy\-per\-bol\-ic monoid admits a word-hy\-per\-bol\-ic structure where
the language of representatives $L$ projects \emph{bijectively} onto
the monoid. By analogy with the case of automatic
groups~\cite[\S~2.5]{epstein_wordproc} and
semigroups~\cite[p.~380]{campbell_autsg}, such a word-hy\-per\-bol\-ic
structure is called a \defterm{word-hy\-per\-bol\-ic structure with
  uniqueness}. The question also applies to semigroups; Duncan \&
Gilman have a particular interest in the situation for monoids because a
positive answer in that case would imply that the class of
word-hy\-per\-bol\-ic semigroups is closed under adjoining an identity.

As explained in \fullref{Subsection}{subsec:wordhyp} below, every
hyperbolic group admits a word-hyperbolic structure with
uniqueness. Furthermore, every automatic semigroup admits an automatic
structure with uniqueness \cite[Corollary~5.6]{campbell_autsg}.

The main goal of this paper is to give a negative answer to the
question of Duncan \& Gilman by exhibiting an example of a
word-hy\-per\-bol\-ic monoid that does not admit a
word-hy\-per\-bol\-ic structure with uniqueness
(\fullref{Example}{ex:wordhypnotunique}). En route, however, a result
of independent interest is proven: that any monoid presented by a
confluent con\-text-free monadic rewriting system is
word-hy\-per\-bol\-ic (\fullref{Theorem}{thm:contextfreesrs}).

\section{Preliminaries}
\label{sec:preliminaries}

This paper assumes familiarity with regular languages and finite
automata and with con\-text-free grammars and languages; see
\cite[Chs~2--4]{hopcroft_automata} for background reading and for the
notation used here.

The empty word (over any alphabet) is denoted $\emptyword$.

\subsection{Word-hyperbolicity}
\label{subsec:wordhyp}

\begin{definition}
\label{def:wordhyp}
A \defterm{word-hy\-per\-bol\-ic structure} for a semigroup $S$ is a
pair $(L,M(L))$, where $L$ is a regular language over an alphabet $A$
representing a finite generating set for $S$ such that $L$ maps onto
$S$, and where
\[
M(L) = \{u\#_1v\#_2w^\rev : u,v,w \in L \land uv =_S w\}
\]
(where $\#_1$ and $\#_2$ are new symbols not in $A$ and $w^\rev$
denotes the reverse of the word $w$) is con\-text-free. The pair
$(L,M(L)$ is a \defterm{word-hy\-per\-bol\-ic structure with uniqueness} if
$L$ maps bijectively onto $S$; that is, if every element of $S$ has a
unique representative in $L$.

A semigroup is \defterm{word-hy\-per\-bol\-ic} if it admits a
word-hy\-per\-bol\-ic structure.
\end{definition}

A group is hy\-per\-bol\-ic in the sense of Gromov~\cite{gromov_hyperbolic}
if and only if it admits a word-hy\-per\-bol\-ic structure
(\cite[Theorem~1]{gilman_hyperbolic} and
\cite[Corollary~4.3]{duncan_hyperbolic}). Furthermore, every group
admits a word-hy\-per\-bol\-ic structure with uniqueness: if $(L,M(L))$ is a
word-hy\-per\-bol\-ic structure for a group $G$, then the fellow-traveller
property is satisfied \cite[Theorem~4.2]{duncan_hyperbolic} and so $L$
forms part of an automatic structure for $G$
\cite[Theorem~2.3.5]{epstein_wordproc}. Therefore there exists an
automatic structure with uniqueness for $G$, where the language of
representatives $L'$ is a subset of $L$
\cite[Theorem~2.5.1]{epstein_wordproc}. Hence $(L',M(L) \cap
L'\#_1L'\#_2(L')^\rev)$ is a word-hy\-per\-bol\-ic structure with uniqueness
for $G$.

\subsection{Rewriting systems}

This subsection contains facts about string rewriting needed later in
the paper. For further background information, see
\cite{book_srs}.

A \defterm{string rewriting system}, or simply a \defterm{rewriting
  system}, is a pair $(A,\rel{R})$, where $A$ is a finite alphabet and
$\rel{R}$ is a set of pairs $(\ell,r)$, known as \defterm{rewriting
  rules}, drawn from $A^* \times A^*$. The single reduction relation
$\imderives_{\rel{R}}$ is defined as follows: $u \imderives_{\rel{R}} v$ (where $u,v \in
A^*$) if there exists a rewriting rule $(\ell,r) \in \rel{R}$ and
words $x,y \in A^*$ such that $u = x\ell y$ and $v = xry$. That is, $u
\imderives_{\rel{R}} v$ if one can obtain $v$ from $u$ by substituting the word
$r$ for a subword $\ell$ of $u$, where $(\ell,r)$ is a rewriting
rule. The reduction relation $\derives_{\rel{R}}$ is the reflexive and
transitive closure of $\imderives_{\rel{R}}$. The process of replacing a subword
$\ell$ by a word $r$, where $(\ell,r) \in \rel{R}$, is called
\defterm{reduction}, as is the iteration of this process.

A word $w \in A^*$ is \defterm{reducible} if it contains a subword $\ell$
that forms the left-hand side of a rewriting rule in $\rel{R}$; it is
otherwise called \defterm{irreducible}.

The string rewriting system $(A,\rel{R})$ is \defterm{noetherian} if
there is no infinite sequence $u_1,u_2,\ldots \in A^*$ such that $u_i
\imderives_{\rel{R}} u_{i+1}$ for all $i \in \nset$. That is, $(A,\rel{R})$ is
noetherian if any process of reduction must eventually terminate with
an irreducible word. The rewriting system $(A,\rel{R})$ is
\defterm{confluent} if, for any words $u, u',u'' \in A^*$ with $u
\derives_{\rel{R}} u'$ and $u \derives_{\rel{R}} u''$, there exists a word $v \in A^*$
such that $u' \derives_{\rel{R}} v$ and $u'' \derives_{\rel{R}} v$. 

The string rewriting system $(A,\rel{R})$ is \defterm{length-reducing}
if $(\ell,r) \in \rel{R}$ implies that $|\ell| > |r|$. Observe that
any length-reducing rewriting system is necessarily noetherian.
The rewriting system $(A,\rel{R})$ is
 \defterm{monadic} if it is
length-reducing and the right-hand side of each rule in $\rel{R}$ lies
in $A \cup \{\emptyword\}$; it is 
\defterm{special} if it is
length-reducing and each right-hand side is the empty word
$\emptyword$. Observe that every special rewriting system is also monadic.

A special or monadic rewriting system $(A,\rel{R})$ is con\-text-free
if, for each $a \in A \cup\{\emptyword\}$, the set of all left-hand
sides of rules in $\rel{R}$ with right-hand side $a$ is a context-free
language.

Let $(A,\rel{R})$ be a confluent noetherian string rewriting
system. Then for any word $u \in A^*$, there is a unique irreducible
word $v \in A^*$ with $u \derives_{\rel{R}} v$
\cite[Theorem~1.1.12]{book_srs}. The irreducible words are said to be
in \defterm{normal form}. The monoid presented by $\pres{A}{\rel{R}}$
may be identified with the set of normal form words under the
operation of `concatenation plus reduction to normal form'.



The subscript symbols in the derivation and one-step derivation
relations $\derives_{\rel{R}}$ and $\imderives_{\rel{R}}$ for a
rewriting system $\rel{R}$ are never omitted in this paper, in order
to avoid any possible confusion with the derivation and one-step
derivation relations $\derives_\Gamma$ and $\imderives_\Gamma$ for a
context-free grammar $\Gamma$.

\section{Monoids presented by confluent con\-text-free monadic rewriting systems}

\begin{theorem}
\label{thm:contextfreesrs}
Let $(A,\rel{R})$ be a confluent con\-text-free monadic rewriting
system. Then $(A^*,M(A^*))$ is a word-hy\-per\-bol\-ic structure for the monoid
presented by $\pres{A}{\rel{R}}$.
\end{theorem}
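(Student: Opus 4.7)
The language $L = A^*$ is trivially regular and maps onto the monoid, so the content of the theorem is showing that $M(A^*)$ is con\-text-free. My plan is to construct a CFG for $M(A^*)$ whose structure reflects the two reductions $uv \derives_{\rel{R}} \hat z$ and $w \derives_{\rel{R}} \hat z$ to the common normal form $\hat z$ of $uv$ and $w$; confluence (with noetherianity from length-reduction) ensures that $uv =_S w$ is precisely the existence of such a $\hat z$.

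The crucial technical step is a decomposition lemma: for any $c_1, \dots, c_k \in A$ and any $y \in A^*$, $y \derives_{\rel{R}} c_1 c_2 \cdots c_k$ if and only if $y$ admits a factorisation
\[
y = s_0 y_1 s_1 y_2 s_2 \cdots y_k s_k
\]
with each $s_i \derives_{\rel{R}} \emptyword$ and each $y_j \derives_{\rel{R}} c_j$. The ``if'' direction is immediate. For ``only if'' I would induct on the length of the derivation: given $y \imderives_{\rel{R}} y' \derives_{\rel{R}} c_1 \cdots c_k$ with $y'$ decomposed by the inductive hypothesis, locate the unique component of that decomposition in which the right-hand side of the single rewriting step lies (or on whose boundary it lies, when the rule is $\emptyword$-producing) and absorb the left-hand side of the rule into that component. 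Monadicity is essential here: because right-hand sides are single letters or empty, the replaced position lives inside a single component of the existing factorisation.

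Granted the lemma, for each $b \in A \cup \{\emptyword\}$ the set $\Sigma_b = \{w \in A^* : w \derives_{\rel{R}} b\}$ is con\-text-free. I would exhibit a CFG by combining the given grammars $G_a$ for $L_a$: introduce a fresh non-terminal $\Sigma_b$ for each $b \in A \cup \{\emptyword\}$; add productions $\Sigma_a \to a$, $\Sigma_\emptyword \to \emptyword$, $\Sigma_b \to \Sigma_\emptyword \Sigma_b$ and $\Sigma_b \to \Sigma_b \Sigma_\emptyword$ to absorb $\emptyword$-reducing factors $s_i$; and include a modified copy of each $G_a$ in which every terminal letter $b$ on the right-hand side is replaced by the non-terminal $\Sigma_b$, tied in via $\Sigma_a \to S_a$. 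The decomposition lemma is exactly what shows this generates $\Sigma_b$ faithfully.

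For the final step, the decomposition lemma applied to both $uv$ and $w$ forces any valid string $u \#_1 v \#_2 w^\rev$ (setting aside $\#_1$ momentarily) to have the palindromic shape
\[
s_0 y_1 s_1 \cdots y_k s_k \,\#_2\, t_k^\rev z_k^\rev t_{k-1}^\rev \cdots z_1^\rev t_0^\rev
\]
with $y_j, z_j \in \Sigma_{c_j}$ and $s_i, t_i \in \Sigma_\emptyword$ for some common $c_1, \dots, c_k \in A$. This layered matching around $\#_2$ is generated by a CFG with productions along the lines of $N \to \Sigma_\emptyword N' \Sigma_\emptyword^\rev$ and $N' \to \#_2 \mid \Sigma_b N \Sigma_b^\rev$ for each $b \in A$, using the grammars from the previous step and closure of CFLs under reversal. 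To insert $\#_1$ anywhere in the left half I would add ``marked'' copies of the left-side non-terminals; exactly one such marked non-terminal appears in a derivation, propagates down the tree, and is ultimately expanded by a $\#_1$-insertion variant of $\Sigma_b$ or $\Sigma_\emptyword$ (a CFL, as it is the intersection of the inverse homomorphism of $\Sigma_b$ under erasing $\#_1$ with the regular language $A^*\#_1 A^*$). The chief obstacle is the decomposition lemma; the grammar construction after that is routine bookkeeping using closure of CFLs under substitution, union, reversal, inverse homomorphism, and intersection with regular languages.
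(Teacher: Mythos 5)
Your proposal is correct and follows essentially the same route as the paper: your decomposition lemma and the grammar assembled from modified copies of the $\Gamma_a$ together with the $\emptyword$-absorbing productions is precisely the paper's grammar $\Theta$ (whose non-terminals $\$_a$ play the role of your $\Sigma_a$, with the two inductions on rewriting sequences and on grammar derivations supplying exactly your lemma), your palindromic layering around $\#_2$ is the paper's grammar $\Delta$ for $\{\$_p\#_2\tilde{\$}_{p^\rev}\}$, and confluence plus noetherianity is invoked identically to replace $u =_M v$ by reduction to a common normal form. The only cosmetic difference is that the paper disposes of $\#_1$ once and for all at the outset, writing $M(A^*)$ as the intersection of $A^*\#_1A^*\#_2A^*$ with the inverse image of $K = \{u\#_2v^\rev : u =_M v\}$ under the homomorphism erasing $\#_1$, rather than threading a marked non-terminal through the grammar.
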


\begin{proof}
Let $M$ be the monoid presented by $\pres{A}{\rel{R}}$. Let
\[
K = \{u\#_2v^\rev : u,v \in A^*, u =_M v\}.
\]
Let $\phi : (A \cup \{\#_1,\#_2\})^* \to (A \cup \{\#_1\})^*$ be the homomorphism extending
\[
\#_1 \mapsto \emptyword,\quad \#_2 \mapsto \#_2,\quad a \mapsto a \text{ for all $a \in A$}.
\]
Then $M(A^*) = K\phi^{-1} \cap A^*\#_1A^*\#_2A^*$. Since the class of context-free languages
is closed under taking inverse homomorphisms, to prove that $M(A^*)$ is
context-free it suffices to prove that $K$ is context-free.

For each $a
\in A \cup \{\emptyword\}$, let $\$_a$ and $\tilde{\$}_a$ be new
symbols. Let
\begin{align*}
\$_{A \cup \{\emptyword\}} &= \{\$_a : a \in A \cup \{\emptyword\}\} & \$_A &= \{\$_a : a \in A\}, \\
\tilde{\$}_{A \cup \{\emptyword\}} &= \{\tilde{\$}_a : a \in A \cup \{\emptyword\}\} & \tilde{\$}_A &= \{\tilde{\$}_a : a \in A\},
\end{align*}
and for any word $w = w_1\cdots w_n$ with $w_i \in A$, let $\$_w$ and
$\tilde{\$}_w$ be abbreviations for $\$_{w_1}\cdots\$_{w_n}$ and
$\tilde{\$}_{w_1}\cdots\tilde{\$}_{w_n}$ respectively.

For each $a \in A \cup \{\emptyword\}$, let $\Gamma_a =
(N_a,A,P_a,O_a)$ be a con\-text-free grammar such that $L(\Gamma_a)$ is
the set of left-hand sides of rewriting rules in $\rel{R}$ whose
right-hand side is $a$. Since $\rel{R}$ is length-reducing, no
$L(\Gamma_a)$ contains $\emptyword$. Therefore assume without loss of
generality that no $\Gamma_a$ contains a production whose right-hand
side is $\emptyword$ \cite[Theorem~4.3]{hopcroft_automata}.

Modify each $\Gamma_a$ by replacing each appearance of a terminal
letter $b \in A$ in a production by $\$_b$; the grammar $\Gamma'_a =
(N'_a,\$_{A \cup \{\emptyword\}},P'_a,O'_a)$ thus formed has the
property that $w \in L(\Gamma_a)$ if and only if $\$_w \in
L(\Gamma'_a)$. Modify each $\Gamma_a$ by reversing the right-hand side
of every production in $P_a$ and by replacing each appearance of a
terminal letter $b \in A$ in a production by $\tilde{\$}_b$; the grammar
$\Gamma''_a = (N''_a,\$_{A \cup \emptyword},P''_a,O''_a)$ thus
produced has the property that $w \in L(\Gamma_a)$ if and only if
$\tilde{\$}_{w^\rev} \in L(\Gamma''_a)$.

The language
\[
\{\$_p\#_2\tilde{\$}_{p^\rev} : p \in A^*\}
\]
is clearly con\-text-free. (Notice that $\$_p$ can either be an
abbreviation for a non-empty word $\$_{p_1}\cdots\$_{p_k}$ or the
single letter $\$_\emptyword$, and similarly for $\tilde{\$}_{p^\rev}$.) Let $\Delta = (N_\Delta,\$_A \cup \tilde{\$}_A
\{\#_2\},P_\Delta,O_\Delta)$ be a con\-text-free grammar defining
this language. Assume without loss of generality that the various non-terminal
alphabets $N'_a$, $N''_a$ and $N_\Delta$ are pairwise disjoint.

Define a new con\-text-free grammar $\Theta =
(N_\Theta,A\cup\{\#_2\},P_\Theta,O_\Delta)$ by letting
\[
N_\Theta = N_\Delta \cup \$_{A \cup \{\emptyword\}} \cup \tilde{\$}_{A \cup \{\emptyword\}} \cup \bigcup_{a \in A \cup\{\emptyword\}} (N'_a \cup N''_a),
\]
and
\begin{align}
P_\Theta &= P_\Delta \cup \Bigl[\bigcup_{a \in A\cup\{\emptyword\}} (P'_a \cup P''_a)\Bigr] \nonumber\\
&\qquad\cup \bigl\{\$_a \to \$_a\$_\emptyword, \$_a \to \$_\emptyword\$_a, \tilde{\$}_a \to \tilde{\$}_a\tilde{\$}_\emptyword, \tilde{\$}_a \to \tilde{\$}_\emptyword\tilde{\$}_a : a \in A \cup \{\emptyword\}\bigr\} \label{eq:insprods}\\
&\qquad\cup \bigl\{\$_a \to O'_a, \tilde{\$}_a \to O''_a : a \in A \cup \{\emptyword\}\bigr\} \label{eq:startprods}\\
&\qquad\cup \bigl\{\$_a \to a, \tilde{\$}_a \to a : a \in A \cup \{\emptyword\}\bigr\}.\label{eq:endprods}
\end{align}
Notice that elements of $\$_{A\cup\{\emptyword\}}$ now play the
r\^{o}le of non-terminals, while in the various grammars $\Gamma'_a$
and $\Gamma''_a$, they were terminals. Notice further that the start
symbol of $\Theta$ is $O_\Delta$.

The aim is now to show that $L(\Theta) = K$.

\begin{lemma}
\label{lem:lthetasubset}
If $w \in L(\Theta)$, then $w = u\#_2v^\rev$ for some $u,v \in A^*$,
and there exists some $p \in A^*$ such that $\$_p \derives_\Theta u$
and $\tilde{\$}_{p^\rev} \derives_\Theta v^\rev$.
\end{lemma}

\begin{proof}
Let $w \in L(\Theta)$. Then $O_\Delta \derives_\Theta w$, and the first production
applied is from $P_\Delta$. Since no production in $P_\Theta -
P_\Delta$ introduces a non-terminal symbol from $N_\Delta$, assume
that all productions from $P_\Delta$ in the derivation of $w$ are
carried out first, before any productions from $P_\Theta -
P_\Delta$. This shows that there is some word $q \in L(\Delta)$ such
that $O_\Delta \derives_\Theta q \derives_\Theta w$. By the definition
of $\Delta$, it follows that $q = \$_p\#_2\tilde{\$}_{p^\rev}$ with
\[
O_\Delta \derives_\Theta \$_p\#_2\tilde{\$}_{p^\rev} \derives_\Theta w.
\]
Since symbols from $\$_{A \cup \{\emptyword\}} \cup \tilde\$_{A \cup
  \{\emptyword\}}$ can ultimately only derive symbols from $A$ (and
not the symbol $\#_2$), it follows that there exist $u,v \in A^*$ with
$\$_p \derives_\Theta u$ and $\tilde{\$}_{p^\rev} \derives_\Theta v^\rev$ such
that $w = u\#_2v^\rev$.
\end{proof}

\begin{lemma}
\label{lem:rsderivimpliescfgderiv}
Let $w,u \in A^*$. If $w \derives_{\rel{R}} u$, then $\$_u \derives_\Theta \$_w$.
\end{lemma}

\begin{proof}
Suppose
\[
w = w_0 \imderives_{\rel{R}} w_1 \imderives_{\rel{R}} w_2 \imderives_{\rel{R}} \ldots \imderives_{\rel{R}} w_n = u
\]
is a sequence of rewriting of minimal length from $w$ to $u$.

Proceed by induction on $n$. If $n = 0$, it follows that $w = u$ and
there is nothing to prove. So suppose $n > 0$ and that the result
holds for all shorter such minimal-length rewriting sequences. Then
$w_0 \imderives_{\rel{R}} w_1$, and so $w_0 = x\ell y$ and $w_1 = xay$
for some $x,y \in A^*$, $a \in A \cup \{\emptyword\}$, and $(\ell,a)
\in \rel{R}$. So $\ell \in L(\Gamma_a)$. Hence, first applying a
production of type \eqref{eq:startprods}, the construction of
$\Gamma'_a$ and the inclusion of all its productions in $\Theta$ shows
that
\begin{equation}
\label{eq:derivinglhs}
\$_a \imderives_\Theta O'_a \derives_\Theta \$_\ell.
\end{equation}
By the induction hypotheses, $\$_u
\derives_\Theta \$_{w_1}$. Now consider the cases $a \in A$ and $a =
\emptyword$ separately:
\begin{enumerate}

\item $a \in A$. Then $\$_{w_1} = \$_{x}\$_{a}\$_{y}$ and so 
\begin{align*}
\$_u &\derives_\Theta \$_{w_1}  && \text{(by the induction hypothesis)}\\
&\;\rlap{$=$}\phantom{\derives_\Theta}\; \$_x\$_a\$_y \\
&\derives_\Theta \$_x\$_\ell\$_y  && \text{(by \eqref{eq:derivinglhs})}\\
&\;\rlap{$=$}\phantom{\derives_\Theta}\; \$_{w_0} \\
&\;\rlap{$=$}\phantom{\derives_\Theta}\; \$_w.
\end{align*}

\item $a = \emptyword$. Then $\$_{w_1} = \$_{x}\$_{y}$ and so by \eqref{eq:derivinglhs},
\begin{align*}
\$_u &\derives_\Theta \$_{w_1} && \text{(by the induction hypothesis)}\\
 &\derives_\Theta \$_x\$_y  \\
&\imderives_\Theta \$_x\$_a\$_y  && \text{(by \eqref{eq:insprods})}\\
&\derives_\Theta \$_x\$_\ell\$_y  && \text{(by \eqref{eq:derivinglhs})}\\
&\;\rlap{$=$}\phantom{\derives_\Theta}\; \$_{w_0} \\
&\;\rlap{$=$}\phantom{\derives_\Theta}\; \$_w.
\end{align*}
\end{enumerate}
This completes the proof.
\end{proof}

\begin{lemma}
\label{lem:cfgderivimpliesrsderiv}
Let $u,w \in A^*$. If $\$_u \derives_\Theta \$_w$, then $w \derives_{\rel{R}} u$.
\end{lemma}

\begin{proof}
The strategy is to proceed by induction on the number $n$ of productions of type
\eqref{eq:insprods} or \eqref{eq:startprods} in the minimal-length
derivation of $\$_w$ from $\$_u$.

Suppose such a minimal length derivation involves a production
$\$_\emptyword \to \emptyword$ (of type \eqref{eq:endprods}). If this
symbol $\$_\emptyword$ is introduced by a production of type
\eqref{eq:insprods}, then the derivation would not be of minimal
length. So this symbol $\$_\emptyword$ must be present in $\$_u$,
which, by the definition of the abbreviation $\$_u$ requires $u =
\emptyword$. But this would mean that the derivation produced
$\emptyword$, which contradicts the hypothesis of the lemma. So the
derivation does not involve productions $\$_\emptyword \to
\emptyword$.

The only productions where symbols from $\$_{A \cup \{\emptyword\}}$
appear on the left-hand side are of types \eqref{eq:insprods},
\eqref{eq:startprods}, and \eqref{eq:endprods}. Since there are no
productions $\$_\emptyword \to \emptyword$, any production of type
\eqref{eq:endprods} would produce a terminal symbol, which is
impossible. So the first production applied in the derivation sequence
must be of type \eqref{eq:insprods} or \eqref{eq:startprods}.

Suppose first that $n = 0$. Then there is no possible first production
and thus $\$_w = \$_u$, which entails $w = u$ and so there is nothing to
prove.

Suppose now that $n > 0$ and that the result holds for all shorter
such minimal-length derivations. Consider cases separately depending
on the whether the first production applied in the derivation is of type
\eqref{eq:insprods} or \eqref{eq:startprods}.:

\begin{enumerate}

\item Type \eqref{eq:insprods}. So $\$_u = \$_x\$_y \imderives_\Theta
  \$_x\$_\emptyword\$_y$ for some $x,y \in A^*$ with $xy = u$. The
  symbol $\$_\emptyword$ thus produced does not derive $\emptyword$
  since no production $\$_\emptyword \to \emptyword$ is involved. So
  $\$_x \derives_\Theta \$_{w'}$, $\$_\emptyword \derives_\Theta
  \$_{w''}$, $\$_y \derives_\Theta \$_{w'''}$, where $w = w'w''w'''$,
  where $w',w''' \in A^*$ and $w'' \in A^+$ and all three of these
  derivations involve fewer than $n$ productions of type
  \eqref{eq:insprods} or \eqref{eq:startprods}. By the induction
  hypothesis, $w' \derives_{\rel{R}} x$, $w'' \derives_{\rel{R}}
  \emptyword$, and $w''' \derives_{\rel{R}} y$, and thus $w =
  w'w''w''' \derives_{\rel{R}} xy = u$.

\item Type \eqref{eq:startprods}. So $\$_u = \$_x\$_a\$_y
  \imderives_\Theta \$_xO'_a\$_y$ for some $x,y \in A^*$ with $xay =
  u$. Now, $O'_a$ is the start symbol of $\Gamma'_a$, and $L(\Gamma'_a)$
  consists of words of the form $\$_\ell$ where $\ell
  \imderives_{\rel{R}} a$. Thus
\[
\$_u \imderives_\Theta \$_xO'_a\$_y \derives_\Theta \$_x\$_\ell\$_y \derives_\Theta \$_w.
\]
Thus $\$_x \derives_\Theta \$_{w'}$, $\$_\ell \derives_\Theta
\$_{w''}$, and $\$_y \derives_\Theta \$_{w'''}$, where $w',w'',w'''
\in A^*$ are such that $w = w'w''w'''$, and each of these
derivation sequences involve fewer than $n$ productions of type
\eqref{eq:insprods} or \eqref{eq:startprods}. Hence by the induction
hypothesis, $w' \derives_{\rel{R}} x$, $w'' \derives_{\rel{R}} \ell$,
and $w''' \derives_{\rel{R}} y$. Therefore
\[
w = w'w''w''' \derives_{\rel{R}} x\ell y \imderives_{\rel{R}} xay = u.
\]
\end{enumerate}
This completes the proof.
\end{proof}

\begin{lemma}
\label{lem:rsderiviffcfgderiv}
For any $u,w \in A^*$
$w \derives_{\rel{R}} u$ if and only if $\$_u \derives_\Theta w$.
\end{lemma}

\begin{proof}
Suppose $w \derives_{\rel{R}} u$. Then $\$_u \derives_\Theta \$_w$ by
\fullref{Lemma}{lem:rsderivimpliescfgderiv}. By $|w|$ applications of
productions of type \eqref{eq:endprods}, $\$_w \derives_\Theta w$. So
$\$_u \derives_\Theta w$.

Suppose that $\$_u \derives_\Theta w$. Only
productions of type \eqref{eq:endprods} have terminals on the
right-hand side. So $\$_u \derives_\Theta \$_w \derives_\Theta
w$. So by \fullref{Lemma}{lem:cfgderivimpliesrsderiv}, $w
\derives_{\rel{R}} u$.
\end{proof}

Reasoning symmetric to the proofs of
\fullref{Lemmata}{lem:rsderivimpliescfgderiv},
\ref{lem:cfgderivimpliesrsderiv}, and \ref{lem:rsderiviffcfgderiv}
establishes the following result:

\begin{lemma}
\label{lem:rsderiviffcfgderivrev}
For any $u,w \in A^*$
$w \derives_{\rel{R}} u$ if and only if $\tilde{\$}_{u^\rev} \derives_\Theta w^\rev$.
\end{lemma}

Suppose $u\#_2v^\rev \in K$. Then $u,v \in A^*$ and $u =_M
v$. Therefore there is a normal form word $p$ with $u
\derives_{\rel{R}} p$ and $v \derives_{\rel{R}} p$. So by
\fullref{Lemmata}{lem:rsderiviffcfgderiv} and
\ref{lem:rsderiviffcfgderivrev}, $\$_p \derives_\Theta u$ and
$\tilde{\$}_{p^\rev} \derives_\Theta v^\rev$. Since every production
in $P_\Delta$ is included in $P_\Theta$, it follows that
\[
O_\Delta \derives_\Theta \$_p\#_2\tilde{\$}_{p^\rev},
\]
whence $O_\Delta \derives_\Theta u\#_2 v^\rev$ and so
$u\#_2v^\rev \in L(\Theta)$.

Conversely, suppose $w \in L(\Theta)$. By
\fullref{Lemma}{lem:lthetasubset}, there are words $u,v,p \in A^*$
with $w = u\#_2v^\rev$, $\$_p \derives_\Theta u$, and $\tilde\$_{p^\rev}
\derives_\Theta v^\rev$. By \fullref{Lemmata}{lem:rsderiviffcfgderiv} and
\ref{lem:rsderiviffcfgderivrev}, it follows that $u \derives_{\rel{R}}
p$ and $v \derives_{\rel{R}} p$. So $u =_M v$ and thus $w = u\#_2v^\rev
\in K$.

Hence $L(\Theta) = K$. Thus $K$ and so $M(A^*)$ are
context-free. Therefore $(A^*,M(A^*))$ is a word-hy\-per\-bol\-ic
structure for the monoid $M$.
\end{proof}

\section{Word-hy\-per\-bol\-ic structures with uniqueness}

This section exhibits an example of a word-hy\-per\-bol\-ic monoid that
does not admit a word-hy\-per\-bol\-ic structure with uniqueness.

The following preliminary result, showing that admitting a
word-hy\-per\-bol\-ic structure with uniqueness is not dependent on the
choice of generating set, is needed. The proof is similar to that of
the independence of word-hyperbolicity from the choice of generating
set \cite[Theorem~3.4]{duncan_hyperbolic}, but the detail and
exposition are different to make clear that uniqueness is
preserved. Additionally, the result here also shows that whether one
deals with monoid or semigroup generating sets is not a concern.

\begin{proposition}
\label{prop:monoiduniquenesschangegen}
Let $M$ be a monoid that admits a word-hy\-per\-bol\-ic structure with
uniqueness over either a semigroup or monoid generating set, and let
$A$ be a finite alphabet representing a semigroup or monoid generating
set for $M$. Then there is a language $L$ such that $(A,L)$ is a
word-hy\-per\-bol\-ic structure with uniqueness for $M$.
\end{proposition}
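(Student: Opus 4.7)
The plan is to transport the given word-hy\-per\-bol\-ic structure with uniqueness, say $(K,M(K))$ over some alphabet $B$, to the alphabet $A$ by substituting each letter of $B$ by a fixed word over $A$ representing the same element of $M$.

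Concretely, first fix a homomorphism $\beta : B^* \to A^*$ with the property that for each $b \in B$ the word $\beta(b)$ represents the same element of $M$ as $b$; such a $\beta$ exists because $A$ generates $M$, and one may moreover take $\beta(b) \in A^+$ when needed, so as to respect the semigroup-generating-set convention. Then set $L = K\beta$. Routine observations show that $L$ is regular, being the homomorphic image of a regular language. Bijectivity of $L$ onto $M$ is inherited from that of $K$: if $k_1\beta$ and $k_2\beta$ in $L$ represent the same element of $M$, then so do $k_1,k_2 \in K$, forcing $k_1 = k_2$ by uniqueness in $K$, and thus $k_1\beta = k_2\beta$. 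In particular, the map $K \to L$ given by $k \mapsto k\beta$ is itself a bijection, and each word in $L$ has a uniquely determined $K$-preimage.

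The crux of the argument, and the main obstacle, is to show that $M(L)$ is con\-text-free. The naive idea of applying $\beta$ letter-by-letter to $M(K)$ fails: the third component of a word of $M(K)$ is reversed, and $(w'\beta)^\rev$ is in general not equal to $((w')^\rev)\beta$. The plan is therefore to express $M(L)$ as the image of $M(K)$ under a rational transduction (equivalently, a finite-state transducer, or a GSM mapping) that is aware of which side of $\#_2$ it is currently reading. In its ``pre-$\#_2$'' state the transducer translates each $b \in B$ to $\beta(b)$ and passes $\#_1$ through; upon reading $\#_2$ it outputs $\#_2$ and moves to a ``post-$\#_2$'' state in which each $b \in B$ is translated to the reversed word $\beta(b)^\rev$. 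A direct calculation then verifies that this transducer sends $u'\#_1v'\#_2(w')^\rev$ to $(u'\beta)\#_1(v'\beta)\#_2(w'\beta)^\rev$, and that every element of $M(L)$ arises in this way, relying on the unique-preimage property of $\beta$ on $K$ established above. Since the class of con\-text-free languages is closed under rational transductions, $M(L)$ is con\-text-free, and $(A,L)$ is the required word-hy\-per\-bol\-ic structure with uniqueness for $M$.
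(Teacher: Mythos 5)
Your construction is essentially the paper's own. The paper packages your homomorphism $\beta$ as a rational relation $\rel{P} = \bigl(\{(b,u_b) : b \in B\}\bigr)^*$, sets $L = K \circ \rel{P}$, and obtains $M(L) = M(K) \circ \rel{Q}$ where $\rel{Q} = \rel{P}(\#_1,\#_1)\rel{P}(\#_2,\#_2)\rel{P}^\rev$ \dash which is exactly your midpoint-aware transducer, with $\rel{P}^\rev$ performing the letterwise reversal after $\#_2$. Your bijectivity argument and the appeal to closure of context-free languages under rational transductions likewise match the paper.

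The one thing you miss is a corner case that the paper treats explicitly: if $M$ is a monoid, $A$ represents only a \emph{semigroup} generating set (so the language of representatives ought to lie in $A^+$), and the unique representative in $K$ of the identity is $\emptyword$, then no choice of homomorphism helps, since $\emptyword\beta = \emptyword$ and hence $\emptyword \in L$. Your precaution of taking $\beta(b) \in A^+$ for each letter $b$ does not address this, because the problem is the empty word of $K$ itself, not the images of letters. The paper's fix is to pass to $L_1 = (L - \{\emptyword\}) \cup \{e\}$ for some $e \in A^+$ representing the identity, and to note that $M(L_1)$ remains context-free by modifying a pushdown automaton for $M(L)$ so that it reads $e$ wherever it would have read the empty multiplicand or result. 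Apart from this edge case, your argument is complete and correct.
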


\begin{proof}
Suppose $S$ admits a word-hy\-per\-bol\-ic structure $(B,K)$. For each $b
\in B$, let $u_b \in A^*$ be such that $u_b =_M b$. (If
$A$ represents a semigroup generating set, ensure that $u_b$ lies in
$A^+$; this restriction is important only if $b$ is actually the
identity.)  Let $\rel{P} \subseteq B^* \times A^*$ be the rational relation:
\[
\rel{P} = \bigl(\{(b,u_b) : b \in B\}\bigr)^*
\]
Notice that if $(v,w) \in \rel{P}$, then $v =_M w$.

Let
\[
L = K \circ \rel{P} = \bigl\{w \in A^* : (\exists v \in K)((v,w) \in \rel{P})\bigr\};
\]
observe that $L$ is a regular language. Notice that, by the definition
of $\rel{P}$, for each word $v$ in $K$ there is exactly one word $w \in L$
with $(v,w) \in \rel{P}$. Since for each $x \in M$ there is exactly one word
$v$ in $K$ with $v =_M x$, it follows that there is exactly one
word $w \in L$ with $w =_M x$. That is, the language $L$ maps
bijectively onto $M$.

Let $\rel{Q}$ be the rational relation
\[
\rel{P}(\#_1,\#_1)\rel{P}(\#_2,\#_2)\rel{P}^\rev.
\]
Then $M(L) = M(K) \circ \rel{Q}$ and so $M(L)$ is a
con\-text-free language.

Thus $(A,L)$ is a word-hy\-per\-bol\-ic structure for $S$ in every
case except when $S$ is a monoid, $A$ is a semigroup generating set,
and the representative in $K$ of the identity is $\emptyword$. In this
case, let $L_1 = (L - \{\emptyword\}) \cup \{e\}$, where $e \in A^+$
representings the identity. Then $L_1$ is contained in $A^+$ and maps
bijectively onto $S$. The language $M(L_1)$ is con\-text-free: a
pushdown automaton recognizing it can be constructed from one
recognizing $M(L)$ by modifying it to read $e$ instead of the empty
word as one of the multiplicands or result while simulating reading
the empty word whenever $e$ is encountered.
\end{proof}




\begin{example}
\label{ex:wordhypnotunique}
Let $A = \{a,b,c,d\}$ and let $\rel{R} = \{(ab^\alpha c^\alpha
d,\emptyword) : \alpha \in \nset\}$. Let $M$ be the monoid presented
by $\pres{A}{\rel{R}}$. Then $M$ is word-hy\-per\-bol\-ic but does not admit
a regular language of unique representatives and thus, in particular,
does not admit a word-hy\-per\-bol\-ic structure with uniqueness.
\end{example}

\begin{proof}
Let $G$ be the language of left-hand sides of rewriting rules in
$\rel{R}$. The language $G$ is con\-text-free, and so
$(A,\rel{R})$ is a con\-text-free special rewriting system. Two
left-hand sides of rewriting rules in $\rel{R}$ only overlap if they
are exactly equal, and so $(A,\rel{R})$ is confluent. Hence, by
\fullref{Theorem}{thm:contextfreesrs}, $(A^*,M(A^*))$ is a word-hy\-per\-bol\-ic
structure for the monoid $M$. So $M$ is word-hy\-per\-bol\-ic. Identify $M$
with the language of normal form words of $(A,\rel{R})$.

Suppose for \textit{reductio ad absurdum} that $M$ admits a
word-hy\-per\-bol\-ic structure with uniqueness. Then, by
\fullref{Proposition}{prop:monoiduniquenesschangegen}, there is a regular
language $L$ over $A$ such that $(L,M(L))$ is a word-hy\-per\-bol\-ic
structure with uniqueness for $M$. In particular, every element of $M$
has a unique representative in $L$. Let $\fsa{A}$ be a finite state
automaton recognizing $L$ and let $n$ be the number of states in
$\fsa{A}$.

Now, if $w \in L$ represents $u \in M$, then $w \derives_{\rel{R}} u$: the word
$u$ can be obtained from $w$ by replacing subwords lying in $G$ by the
empty word, which effectively means deleting subwords that lie in
$G$. Consider this process in reverse: the word $w$ can be obtained
from $u$ by inserting words from $G$. 

If a word from $G$ is inserted between two letters of $u$, call it a
\defterm{depth-$1$} inserted word. If a word from $G$ is inserted
between two letters of a depth-$k$ inserted word, it is called a
depth-$(k+1)$ inserted word. A word inserted immediately before the
first letter or immediately after the last letter of a depth-$k$
inserted word also counts as a depth-$k$ inserted word. See the following
example, where for clarity symbols from $u$ are denoted by $x$:
\[
x\underbrace{ab\overbrace{abbccd}^{\text{depth $2$}}bccd}_{\text{depth
    $1$}}\overbrace{abbccd}^{\text{depth
    $1$}}xx\overbrace{abbbcccd}^{\text{depth $1$}}xx.
\]
Then it is possible to obtain $w$ from $u$ by performing all depth-$1$
insertions first, then all depth-$2$ insertions, and so on until $w$
is reached.

Suppose that, in order to obtain $w$ from $u$, a word $ab^\alpha
c^\alpha d \in G$ is inserted for some $\alpha > n$. Let $w =
w'aw''dw'''$, where these distinguished letters $a$ and $d$ are the
first and last letters of this inserted word. Notice that $w''
\derives_{\rel{R}} b^\alpha c^\alpha$, since
\[
w = w'aw''dw'' \derives_{\rel{R}} w'ab^\alpha c^\alpha dw''' \imderives_{\rel{R}} w'w''' \derives_{\rel{R}} u.
\]
(Of course, $w''$ may or may not contain inserted words of greater
depth.) Since $\alpha$ exceeds $n$, the automaton $\fsa{A}$ enters the
same state immediately after reading two different symbols $b$ of this
inserted word, say after reading $w'apb$ and
$w'apbqb$. Similarly it enters the same state immediately after
reading two different symbols $c$ of this inserted word, say after
reading $w'apbqbrc$ and $w'apbqbrcsc$. Therefore by the pumping lemma,
$w$ factors as $w'apbqbrcsctdw'''$ such that
\[
w'apb(qb)^irc(sc)^jtdw''' \in L
\]
for all $i,j \in \nset \cup \{0\}$, where the subwords $p$ and $q$
consist of letters $b$ (members of this inserted word) and possibly
also inserted words of greater depth, the subwords $s$ and $t$ consist
of letters $c$ (members of this inserted word) and possibly also
inserted words of greater depth, and the subword $r$ consists of some
letters $b$ followed by some letters $c$ (members of this inserted
word) and possibly also inserted words of greater depth. Thus
\[
p \derives_{\rel{R}} b^{\beta_1}, \qquad q \derives_{\rel{R}} b^{\beta_2},\qquad r \derives_{\rel{R}} b^{\beta_3} c^{\gamma_3}, \qquad s \derives_{\rel{R}} c^{\gamma_2}, \qquad t \derives_{\rel{R}} c^{\gamma_1},
\]
where $\beta_1 +\beta_2+\beta_3 + 2 = \gamma_1+\gamma_2+\gamma_3 + 2 =
\alpha$. It follows that
\begin{align*}
&\;\phantom{\derives_{\rel{R}}}\;w'apb(qb)^irc(sc)^jtdw'''\\
&\derives_{\rel{R}} w'ab^{\beta_1}b(b^{\beta_2}b)^ib^{\beta_3}c^{\gamma_3}c(c^{\gamma_2}c)^jc^{\gamma_1}dw''' \\
&\;\rlap{$=$}\phantom{\derives_{\rel{R}}}\; w'ab^{\alpha + (\beta_2+1)(i-1)}c^{\alpha + (\gamma_2+1)(j-1)}dw'''.
\end{align*}
Set $i = \gamma_2 + 2$ and $j = \beta_2 + 2$ to see that
\[
w'apb(qb)^{\gamma_2+1}rc(sc)^{\beta_2+1}tdw''' \in L
\]
and
\begin{align*}
&\;\phantom{\derives_{\rel{R}}}\;w'apb(qb)^{\gamma_2+1}rc(sc)^{\beta_2+1}tdw''' \\
&\derives_{\rel{R}} w'ab^{\alpha +
    (\beta_2+1)(\gamma_2+1)}c^{\alpha + (\gamma_2+1)(\beta_2+1)}dw''' \\
&\derives_{\rel{R}} w'w'' \qquad \text{(since $ab^{\alpha +
    (\beta_2+1)(\gamma_2+1)}c^{\alpha + (\gamma_2+1)(\beta_2+1)}d \in G$)} \\
&\derives_{\rel{R}} u.
\end{align*}
So there are two distinct words $w$ and
$w'apb(qb)^{\gamma_2+1}rc(sc)^{\beta_2+1}tdw''$ in $L$ representing the
same element $u$ of $M$. This is a contradiction and so shows the falsity
of the supposition that the insertion of a word $ab^\alpha c^\alpha d$
with $\alpha > n$ is used in obtaining the representative in $L$ from
a normal form word in $M$.

Let $G' = \{ab^\alpha c^\alpha d : \alpha \leq n\}$. Then obtaining a
word $w \in L$ representing $u \in M$ requires inserting only words
from $G' \subset G$.

Now suppose that an insertion of depth greater than $n^2$ is required
to obtain $w$ from $u$. Then $w$ factorizes as $w'apaqdrdw''$, where
the first distinguished letter $a$ and second distinguished letter $d$
are the first and last letters of some inserted word of depth $k$, and
the second distinguished letter $a$ and first distinguished letter $d$
are from some inserted word of depth $\ell > k$, and where the automaton
$\fsa{A}$ enters the same state after reading the two distinguished
letters $a$ and enters the same state after reading the two
distinguished letters $d$. (Such a factorization must exist because
there are only $n^2$ possible pairs of states, and there are inserted
words of depth exceeding $n^2$.) Notice that $aqd \derives_{\rel{R}} \emptyword$
and so $apaqdrd \derives_{\rel{R}} aprd \derives_{\rel{R}} \emptyword$. Then, by the
pumping lemma,
\[
w'apapaqdrdrdw'' \in L,
\]
but
\[
w'apapaqdrdrdw'' \derives_{\rel{R}} w'apaprdrdw'' \derives_{\rel{R}} w'aprdw'' \derives_{\rel{R}} w'w'' \derives_{\rel{R}} u, 
\]
and so there are two representatives $w$ and $w'apapaqdrdrdw''$ in $L$
of $u \in M$. This is a contradiction and so shows the falsity of the
assumption that insertions of depth greater than $n^2$ are required to
obtain the representative in $L$ of a normal form word in $M$.

Suppose that, in the process of performing insertions to obtain a
representative $w \in L$ for an element $u \in M$, a word $w^{(k)}$ is
obtained after the insertions of depth $k$ have been
performed. Suppose further that in performing the insertions of depth
$k+1$, more than $n$ insertions are made between consecutive letters
of $w^{(k)}$ to obtain a word $w^{(k+1)}$. (The reasoning below also
applies if $w^{(k)}$ is the empty word, which would require $k=0$.)
Then $w^{(k+1)}$ factors as
\[
w^{(k+1)} = v'ab^{\alpha_1}c^{\alpha_1}dab^{\alpha_2}c^{\alpha_2}d\cdots ab^{\alpha_h}c^{\alpha_h}dv'',
\]
where $h > n$, and each $ab^{\alpha_i}c^{\alpha_i}d$ is a word from $G'$. Then $w$ factors as
\[
w = w'ap_1dap_2d\cdots ap_hdw'',
\]
where $w' \derives_{\rel{R}} v'$, $w'' \derives_{\rel{R}} v''$, and $p_i \derives_{\rel{R}}
b^{\alpha_i}c^{\alpha_i}$ for each $i$.  Then $\fsa{A}$ enters the
same state on reading $w'ap_1dap_2d\cdots ap_id$ and
$w'ap_1dap_2d\cdots ap_jd$ for some $i < j$. So by the pumping lemma,
\[
q = w'ap_1dap_2d\cdots ap_id(ap_{i+1}d\cdots ap_jd)^2 ap_{j+1}d\cdots ap_kdw'' \in L.
\]
But
\begin{align*}
q&\;\rlap{$=$}\phantom{\derives_{\rel{R}}}\;w'ap_1dap_2d\cdots ap_id(ap_{i+1}d\cdots ap_jd)^2 ap_{j+1}d\cdots ap_kdw'' \\
&\derives_{\rel{R}} v'ab^{\alpha_1}c^{\alpha_1}dab^{\alpha_2}c^{\alpha_2}d\cdots \\
&\qquad\qquad \cdots ab^{\alpha_i}c^{\alpha_i}d(ab^{\alpha_{i+1}}c^{\alpha_{i+1}}d \cdots \\
&\qquad\qquad\qquad\qquad \cdots ab^{\alpha_j}c^{\alpha_j}d)^2 ab^{\alpha_{j+1}}c^{\alpha_{j+1}}d \cdots ab^{\alpha_h}c^{\alpha_h}dv'',\\
&\derives_{\rel{R}} v'v''\\
&\derives_{\rel{R}} u,
\end{align*}
and so there are two representatives $w$ and $q$ of the element $u \in
M$. This contradicts the uniqueness of representatives in $L$ and
shows the falsity of the supposition that more that $n$ insertions
between consecutive letters in the process of obtaining a
representative in $L$ for an element of $M$.

Therefore, to sum up: a representative $w$ in $L$ of an element $u$ of
$M$ can be obtained by inserting elements of $G'$ to a depth of at
most $n^2$, with at most $n$ consecutive words being inserted
between adjacent letters at any stage. Notice that the maximum length
of words in $G'$ is $2n+2$. Thus, starting with empty word, the after
depth~$1$ insertions, there are at most $n(2n+2)$ letters; after
depth~$2$ insertions, at most $n^2(2n+2)^2$; and after depth $n^2$
insertions, at most $h = n^{n^2}(2n+2)^{n^2}$. Similarly, if one starts
with a word $u$ and performs insertions to obtain its representative in
$L$, at most $h$ new symbols are inserted between
any adjacent pair of letters in $u$.

Define
\[
H = \big\{w \in A^* : |w| \leq h, w \derives_{\rel{R}} \emptyword\big\}.
\]
Then, by the observations in the last paragraph, if $u \in M$ with $u
= u_1\cdots u_n$ is represented by $w \in L$, then $w \in
Hu_1Hu_2\cdots Hu_nH$. Define the rational relation
\[
\rel{P} = \big(\{(a,a) : a \in A\} \cup \{(p,\emptyword) : p \in H\}\big)^*.
\]
Then, since removing all subwords in $H$ from a word in $L$ yields the
word to which it rewrites, it follows that
\[
M = (L \circ \rel{P}) \cap (A^* - A^*HA^*) = \bigl\{u \in A^* - A^*HA^* : (\exists w \in L)((w,u) \in \rel{P})\bigr\},
\]
and so $M$, which is the language of normal forms of $(A,\rel{R})$, is
regular.

However, two words $ab^\alpha c^\beta d$ and $ab^{\alpha'} c^{\beta'}
d$ (where $\alpha,\beta,\alpha',\beta' \in \nset$) represent the same
element of $M$ if and only if $\alpha = \beta$ and $\alpha' = \beta'$,
in which case they both represent the identity of $M$. Thus, since in
$M$ the unique representative of the identity is $\emptyword$, the
language $K = ab^*c^*d - M$, which is also regular,
consists of precisely those words of the form $ab^\alpha c^\beta d$
that represent the identity. That is, the language $K$ is $\{ab^\alpha
c^\alpha d : \alpha \in \nset\}$, which is not regular by the pumping
lemma. This is a contradiction, and so $M$ does not admit a regular
language of unique representatives.
\end{proof}

\bibliography{\jobname,automaticsemigroups,languages,presentations,semigroups,c_publications}

\newcommand{\etalchar}[1]{$^{#1}$}
\begin{thebibliography}{HKOT02}

\bibitem[BO93]{book_srs}
R.~V. Book \& F. Otto.
\newblock {\em {String-Rewriting Systems}}.
\newblock Texts and Monographs in Computer Science. Springer-Verlag, New York, 1993.

\bibitem[Cai]{c_rshcg}
A.~J. Cain.
\newblock `Hyperbolicity of monoids presented by confluent monadic rewriting systems'.
\newblock Submitted.
\newblock {\sc url:} \href{http://www-groups.mcs.st-andrews.ac.uk/~alanc/publications/c_rshcg/c_rshcg.pdf}{\nolinkurl{www-groups.mcs.st-andrews.ac.uk/~alanc/publications/c_rshcg/c_rshcg.pdf}}.

\bibitem[CRRT01]{campbell_autsg}
C.~M. Campbell, E.~F. Robertson, N. Ru{\v{s}}kuc, \& R.~M. Thomas.
\newblock `Automatic semigroups'.
\newblock {\em Theoret. Comput. Sci.}, 250, no.~1--2 (2001), pp. 365--391.
\newblock {\sc doi:} \href {http://dx.doi.org/10.1016/S0304-3975(99)00151-6} {{10.1016/S0304-3975(99)00151-6}}.

\bibitem[CS09]{cassaigne_infinitewords}
J. Cassaigne \& P.~V. Silva.
\newblock `Infinite words and confluent rewriting systems: endomorphism               extensions'.
\newblock {\em Internat. J. Algebra Comput.}, 19, no.~4 (2009), pp. 443--490.
\newblock {\sc doi:} \href {http://dx.doi.org/10.1142/S0218196709005111} {{10.1142/S0218196709005111}}.

\bibitem[DG04]{duncan_hyperbolic}
A. Duncan \& R.~H. Gilman.
\newblock `Word hyperbolic semigroups'.
\newblock {\em Math. Proc. Cambridge Philos. Soc.}, 136, no.~3 (2004), pp. 513--524.
\newblock {\sc doi:} \href {http://dx.doi.org/10.1017/S0305004103007497} {{10.1017/S0305004103007497}}.

\bibitem[ECH{\etalchar{+}}92]{epstein_wordproc}
D.~B. A. Epstein, J.~W. Cannon, D.~F. Holt, S.~V. F. Levy, M.~S. Paterson, \& W.~P. Thurston.
\newblock {\em {Word Processing in Groups}}.
\newblock Jones \& Bartlett, Boston, Mass., 1992.

\bibitem[FK04]{fountain_hyperbolic}
J. Fountain \& M. Kambites.
\newblock `Hyperbolic groups and completely simple semigroups'.
\newblock In {\em Semigroups and languages}, pp. 106--132. World Sci. Publ., River Edge, NJ, 2004.
\newblock {\sc doi:} \href {http://dx.doi.org/10.1142/9789812702616\_0007} {{10.1142/9789812702616\_0007}}.

\bibitem[Gil02]{gilman_hyperbolic}
R.~H. Gilman.
\newblock `On the definition of word hyperbolic groups'.
\newblock {\em Math. Z.}, 242, no.~3 (2002), pp. 529--541.
\newblock {\sc doi:} \href {http://dx.doi.org/10.1007/s002090100356} {{10.1007/s002090100356}}.

\bibitem[Gro87]{gromov_hyperbolic}
M. Gromov.
\newblock `Hyperbolic groups'.
\newblock In S.~M. Gersten, ed., {\em Essays in group theory}, vol.~8 of {\em Math. Sci. Res. Inst. Publ.}, pp. 75--263. Springer, New York, 1987.

\bibitem[HKOT02]{hoffmann_relatives}
M. Hoffmann, D. Kuske, F. Otto, \& R.~M. Thomas.
\newblock `Some relatives of automatic and hyperbolic groups'.
\newblock In G.~M. S. Gomes, J.~{\'{E}}. Pin, \& P.~V. Silva, eds, {\em Semigroups, Algorithms, Automata and Languages                   ({C}oimbra, 2001)}, pp. 379--406. World Scientific Publishing, River Edge, N.J., 2002.

\bibitem[HU79]{hopcroft_automata}
J.~E. Hopcroft \& J.~D. Ullman.
\newblock {\em {Introduction to Automata Theory, Languages, and Computation}}.
\newblock Addison--Wesley Publishing Co., Reading, Mass., 1979.

\end{thebibliography}
\bibliographystyle{alphaabbrv}

\end{document}